\newtheorem{exam}{Example}
\newtheorem{theoremz}{Main Theorem}
\newtheorem{rem}{Remark}
\begin{document}

\title{A note on the high power Diophantine equations
%\thanks{Grants or other notes
%about the article that should go on the front page should be
%placed here. General acknowledgments should be placed at the end of the article.}
}
%\subtitle{Do you have a subtitle?\\ If so, write it here}

%\titlerunning{Short form of title} % if too long for running head

\author{ Mehdi Baghalaghdam \and Farzali Izadi
%etc.
}

%\authorrunning{Short form of author list} % if too long for running head

\institute{Mehdi Baghalaghdam \at
Department of Mathematics \\ Faculty of Science \\ Azarbaijan Shahid Madani University \\Tabriz 53751-71379, Iran
\\
\email{mehdi.baghalaghdam@yahoo.com } % \\
\\%\emph{Present address:} of F. Author % if needed
\and
Farzali Izadi \at
Department of Mathematics\\ Faculty of Science \\Urmia University \\Urmia 165-57153, Iran
\\
\email{f.izadi@urmia.ac.ir }
}

\date{Received: date / Accepted: date}
% The correct dates will be entered by the editor
\maketitle
\begin{abstract}
In this paper, we solve the simultaneous Diophantine equations (SDE) $ x_{1}^\mu+ x_{2}^\mu +\cdots+ x_{n}^\mu=k \cdot (y_{1}^\mu+ y_{2}^\mu +\cdots+ y_{\frac{n}{k}}^\mu )$, $\mu=1,3$, where $ n \geq3$, and $k \neq n$, is a divisor of $n$ ($\frac{n}{k}\geq2$), and obtain nontrivial parametric solution for them. Furthermore we present a method for producing another solution for the above Diophantine equation (DE) for the case $\mu=3$, when a solution is given. We work out some examples and find nontrivial parametric solutions for each case in nonzero integers. \\ Also we prove that the other DE $\sum_{i=1}^n p_{i} \cdot x_{i}^{a_i}=\sum_{j=1}^m q_{j} \cdot y_{j}^{b_j}$, has parametric solution and infinitely many solutions in nonzero integers with the condition that: there is a $i$ such that $p_{i}=1$, and\\
$(a_{i},a_{1} \cdot a_{2} \cdots a_{i-1} \cdot a_{i+1} \cdots a_{n} \cdot b_{1} \cdot b_{2} \cdots b_{m})=1$, or there is a $j$ such that
$q_{j}=1$, and $(b_{j},a_{1} \cdots a_{n} \cdot b_{1} \cdots b_{j-1} \cdot b_{j+1} \cdots b_{m})=1$.
Finally we study the DE $x^a+y^b=z^c$.

\keywords{ Simultaneous Diophantine equations, Equal sums of the cubes, High power Diophantine equations.}
% \PACS{PACS code1 \and PACS code2 \and more}
\subclass{Primary11D45, Secondary11D72, 11D25.}
\end{abstract}
\newpage
\section{Introduction}\label{intro}

\noindent The cubic Diophantine equations has been studied by some mathematicians.\\ Gerardin gave partial solutions of the SDE
\begin{equation}\label{190}
x^k+y^k+z^k=u^k+v^k+w^k; k=1,3
\end{equation}

\noindent in 1915-16 (as quoted by Dickson, pp. 565, 713 of \cite{5})
and additional partial solutions were given by Bremner \cite{1}. Subsequently, complete solutions were given
in terms of cubic polynomials in four variables by Bremner and Brudno \cite{2},
as well as by Labarthe \cite{7}.
\noindent In \cite{4} Choudhry presented a complete four-parameter solution of \eqref{190}
in terms of quadratic polynomials in which each parameter occurs only in the first degree.\\
\noindent In this paper we are interested in the study of the SDE
\begin{equation}\label{301}
x_{1}^\mu+ x_{2}^\mu+ \cdots +x_{n}^\mu=k \cdot (y_{1}^\mu+ y_{2}^\mu + \cdots + y_{\frac{n}{k}}^\mu ), \mu=1,3,
\end{equation}

\noindent where $ n\geq3$, and $k \neq n$, is a divisor of $n$ ($\frac{n}{k}\geq2$).
\\

\noindent Also we study the other DE
\begin{equation}\label{80}
\sum_{i=1}^n p_{i}.x_{i}^{a_i}=\sum_{j=1}^m q_{j}.y_{j}^{b_j}
\end{equation}
where $m, n, a_{i}, b_{i} \in \mathbb{N}$ and $p_{i}, q_{i}\in \mathbb{Z}$.
\noindent This is a generalization of the Fermat's equation.\\

\noindent A common generalization of Fermat's equation is $Ax^a+By^b+Cz^c=0$, where $a$, $b$, $c$ $\in\mathbb{N}
_{\geq2}$ and $A, B, C\in\mathbb{Z}$ are given integers with $ABC\neq0$ and $x, y, z\in\mathbb{Z}$ are variables.\\
\noindent In $1995$, Darmond and Granville (See \cite{6}) proved:\\
If $A, B, C\in\mathbb{Z}$ , $ABC\neq0$ and $a, b, c \in\mathbb{N}_{\geq2}$ be such that $\frac{1}{a}+\frac{1}{b}+\frac{1}{c}<1$, then the equation $Ax^a+By^b+Cz^c=0$ has only finitely many
solutions $x, y, z\in\mathbb{Z}$ with $(x,y,z)=1$.\\
\noindent The following theorem is due to Beukers (See \cite{3}):\\
Let $A, B, C\in\mathbb{Z}$ , $ABC\neq0$ and $a, b, c \in\mathbb{N}_{\geq2}$ such that $\frac{1}{a}+\frac{1}{b}+\frac{1}{c}>1$. Then the equation $Ax^a+By^b+Cz^c=0$ has either zero or infinitely many solutions $x, y, z\in\mathbb{Z}$ with $(x,y,z)=1$.\\

\noindent To the best of our knowledge the SDE \eqref{301} and the DE \eqref{80} has not already been considered by any
other authors.\\

\noindent We prove the following main theorems:

\begin{theoremz}
Let $ n\geq3$, and $k \neq n$, be a divisor of $n$ ($\frac{n}{k}\geq2$).
Then the SDE \eqref{301}
have infinitely many nontrivial parametric solutions in nonzero integers.
\end{theoremz}

\begin{theoremz}

Let $m, n, a_{i}, b_{i} \in \mathbb{N}$ and $p_{i}, q_{i}\in \mathbb{Z}$. Suppose that there is a $i$ such that $p_{i}=1$, and
$(a_{i},a_{1} \cdot a_{2} \cdots a_{i-1} \cdot a_{i+1} \cdots a_{n} \cdot b_{1} \cdot b_{2} \cdots b_{m})=1$
or there is a $j$ such that
$q_{j}=1$, and $(b_{j},a_{1} \cdots a_{n} \cdot b_{1} \cdots b_{j-1} \cdot b_{j+1} \cdots b_{m})=1$.
Then the DE \eqref{80}
has parametric solution and infinitely many solutions in nonzero integers. This solves the DE $x^a+y^b=z^c$ in the special cases of $(a,b,c)=1$, or $(a,b,c)=2$.
\end{theoremz}

\section{The SDE ${\displaystyle \sum_{i=1}^n x_{i} ^\mu=k \cdot \sum_{j=1}^\frac{n}{k} y_{j}^\mu }$; $\mu=1,3$}
In this section, we proof the first main theorem.
\begin{proof}:
\noindent Firstly, it is clear that if\\
\noindent $X=(x_1, \cdots ,x_{n},y_1, \cdots ,y_{\frac{n}{k}})$, and
\noindent $ Y=(X_1, \cdots ,X_{n},Y_1, \cdots ,Y_{\frac{n}{k}}), $\\
\noindent be two solutions for the SDE \eqref{301}, then for any arbitrary rational number $t$, $X+tY$ is also a solution for $\mu=1$, i.e.,\\
\noindent $(x_1+tX_1)+(x_2+tX_2)+ \cdots +(x_{n}+tX_{n})=\\
k \cdot [(y_1+tY_1)+(y_2+tY_2)+ \cdots +(y_{\frac{n}{k}}+tY_{\frac{n}{k}})]$.\\

\noindent We say that $X$ is a trivial parametric solution of the SDE \eqref{301}, if it is nonzero and satisfies the system trivially.
Let $X$, and $Y$ be two proper trivial parametric solutions of the SDE \eqref{301}, (we will introduce them later).

\noindent We suppose that $ X+tY$, is also a solution for the case $\mu=3$, where $t$ is a parameter, we wish to find $t$.
\noindent By plugging\\
\noindent $X+tY=(x_1+tX_1,x_2+tX_2, \cdots ,x_{n}+tX_{n},y_1+tY_1, y_2+tY_2, \cdots ,y_{\frac{n}{k}}+tY_{\frac{n}{k}})$,\\
\noindent into the SDE \eqref{301}, we get:\\
\noindent $(x_1+tX_1)^3+(x_2+tX_2)^3+ \cdots +(x_{n}+tX_{n})^3=\\
k \cdot [(y_1+tY_1)^3+(y_2+tY_2)^3+ \cdots +(y_{\frac{n}{k}}+tY_{\frac{n}{k}})^3]$.
\\

\noindent Since $X$ and $Y$ are solutions for the SDE \eqref{301}, after some simplifications, we obtain:\\
\noindent $3t^2(x_1X_1^2+x_2X_2^2+ \cdots +x_{n}X_{n}^2-ky_1Y_1^2-ky_2Y_2^2- \cdots -ky_{\frac{n}{k}}Y_{\frac{n}{k}}^2)+$\\
\noindent $3t(x_1^2X_1+x_2^2X_2+ \cdots +x_{n}^2X_{n}-ky_1^2Y_1-ky_2^2Y_2- \cdots -ky_{\frac{n}{k}}^2Y_{\frac{n}{k}})=0.$\\

\noindent Therefore $t=0$ or\\
$$t=\frac{x_1^2X_1+ \cdots +x_{n}^2X_{n}-ky_1^2Y_1- \cdots -ky_{\frac{n}{k}}^2Y_{\frac{n}{k}}}{-x_1X_1^2- \cdots -x_{n}X_{n}^2+ky_1Y_1^2+ \cdots +ky_{\frac{n}{k}}Y_{\frac{n}{k}}^2}:=\frac{A}{B}.$$\\
\noindent By substituting $t$ in the above expressions, and clearing the denominator $B^3$, we get an integer solution for the SDE \eqref{301} as follows:\\
\noindent $(x'_1,x'_2, \cdots ,x'_{n},y'_1,y'_2, \cdots ,y'_{\frac{n}{k}})=$\\

\noindent $(x_1B+AX_1,x_2B+AX_2, \cdots ,x_{n}B+AX_{n},y_1B+AY_1,y_2B+AY_2, \cdots ,y_{\frac{n}{k}}B+AY_{\frac{n}{k}})$.\\

\noindent If we pick up trivial parametric solutions $X$ and $Y$ properly, we will get a nontrivial parametric solution for the SDE \eqref{301}.
\noindent We should mention that not every trivial parametric solutions of $X$ and $Y$ necessarily give rise to a nontrivial parametric solution for the SDE \eqref{301}. So the trivial parametric solutions must be chosen properly.\\
\noindent Now we introduce the proper trivial parametric solutions.\\
\noindent For the sake of simplicity, we only write down the trivial parametric solutions for the left hand side of the SDE \eqref{301}. It is clear that the trivial parametric solutions for the right hand side of the SDE \eqref{301} can be similarly found by the given trivial parametric solutions of the left hand side of the SDE \eqref{301}.

\noindent Let $p_i$, $q_i$, $s_i$, $r_i$$\in\mathbb{Z}$.\\

\noindent A) The case $n\geq3$ and $\frac{n}{k}\geq3$:\\

\noindent There are $4$ different possible cases for $n$:\\

\noindent $1$. $n=2\alpha+1$, $\alpha$ is even.\\
\noindent $X_{left}=(x_1,x_2, \cdots ,x_{n})=
(p_1,-p_1,p_2,-p_2, \cdots ,p_\alpha,-p_\alpha, 0)$, and\\
\noindent $Y_{left}=(X_1,X_2, \cdots ,X_{n}) =$
\\
\noindent $(r_1,r_2,-r_1,-r_2,r_3,r_4,-r_3,-r_4, \cdots ,r_{\alpha-1},r_\alpha,-r_{\alpha-1},0,-r_\alpha).$
\\

\noindent $2$. $n=2\alpha+1$, $\alpha$ is odd.\\
\noindent $X_{left}=(p_1,-p_1,p_2,-p_2, \cdots ,p_\alpha,-p_\alpha,0)$, and\\
\noindent $Y_{left}=(r_1,r_2,-r_1,-r_2, \cdots ,- r_{\alpha-2},-r_{\alpha-1},r_\alpha,0,-r_\alpha)$.\\

\noindent $3$. $n=2\alpha$, $\alpha$ is even.\\
\noindent $X_{left}=(p_1,-p_1,p_2,-p_2, \cdots ,p_\alpha,-p_\alpha)$, and\\
\noindent $Y_{left}=(r_1,r_2,-r_1,-r_2, \cdots , r_{\alpha-1},r_\alpha,-r_{\alpha-1},-r_\alpha).$\\

\noindent $4$. $n=2\alpha$, $\alpha$ is odd.\\
\noindent $X_{left}=(p_1,-p_1,p_2,-p_2, \cdots ,p_\alpha,-p_\alpha)$, and\\
\noindent $Y_{left}=(r_1,r_2,-r_1,-r_2, \cdots , r_{\alpha-2},r_{\alpha-1},-r_{\alpha-2},r_\alpha,-r_{\alpha-1},-r_\alpha).$\\

\noindent B) The case $n\geq3$ and $\frac{n}{k}=2$:\\

\noindent $5$. $n=2k\geq3$ and $\frac{n}{k}=2$.\\
\noindent $X=(p_1,-p_1,p_2,-p_2, \cdots ,p_{k},-p_{k},q_1
,-q_1)$,\\
\noindent $Y=(r_1,r_2,r_1,r_2, \cdots ,r_1,r_2)$.\\

\noindent (In the final case we present the trivial parametric solutions X, and Y, completely.)\\

\noindent Finally, it can be easily shown that for every $i \neq j$, we have:\\
\noindent $x_iB+AX_i$ $\neq\pm$ $(x_jB+AX_j)$, $y_iB+AY_i$ $\neq\pm$ $(y_jB+AY_j)$ and\\ $x_iB+AX_i$$\neq\pm$ $(y_jB+AY_j)$, \\
\noindent i.e., it is really a nontrivial parametric solution in terms of the parameters $p_i$, $q_i$, $r_i$ and $s_i$.\\
\noindent It is clear that by fixing all of the parameters $p_i$, $q_i$, $r_i$ and $s_i$, but one parameter, we can obtain a nontrivial one parameter parametric solution for the SDE \eqref{301} and by changing properly the fixed values, finally get infinitely many nontrivial one parameter parametric solutions for the SDE \eqref{301}.

\noindent Now, the proof of the first main theorem is completed. $\spadesuit$\\
\end{proof}
\noindent Now, we work out some examples.

\begin{exam} $a^\mu+b^\mu+c^\mu+d^\mu+e^\mu+f^\mu=3 \cdot (g^\mu+h^\mu)$; $\mu=1,3$.\\

\noindent Trivial parametric solutions (case $5$):\\
\noindent $X=(x_1,x_2,x_3,x_4,x_5,x_6,x_7,x_8)=(p_1,-p_1,p_2,-p_2,p_3,-p_3,q_1,-q_1),$\\
\noindent $Y=(y_1,y_2,y_3,y_4,y_5,y_6,y_7,y_8)=(r_1,r_2,r_1,r_2,r_1,r_2,r_1,r_2)$,\\
\noindent $A=x_1^2y_1+x_2^2y_2+x_3^2y_3+x_4^2y_4+x_5^2y_5+x_6^2y_6-3x_7^2y_7-3x_8^2y_8$,\\
\noindent $B=-x_1y_1^2-x_2y_2^2-x_3y_3^2-x_4y_4^2-x_5y_5^2-x_6y_6^2+3x_7y_7^2+3x_8y_8^2$,\\

\noindent nontrivial parametric solution:\\
\noindent $a=x_1B+Ay_1=p_1B+r_1A$,\\
\noindent $b=x_2B+Ay_2=-p_1B+r_2A$,\\
\noindent $c=x_3B+Ay_3=p_2B+r_1A$,\\
\noindent $d=x_4B+Ay_4=-p_2B+r_2A$,\\
\noindent $e=x_5B+Ay_5=p_3B+r_1A$,\\
\noindent $f=x_6B+Ay_6=-p_3B+r_2A$,\\
\noindent $g=x_7B+Ay_7=q_1B+r_1A$,\\
\noindent $h=x_8B+Ay_8=-q_1B+r_2A$.\\

\noindent Example $1$:\\
\noindent $p_1=2$, $p_2=3$, $p_3=4$, $q_1=5$, $r_1=1$, $r_2=6$,\\
\noindent Solution:\\
\noindent $371^\mu+756^\mu+476^\mu+651^\mu+581^\mu+546^\mu=3 \cdot (686^\mu+441^\mu)$; $\mu=1,3$..\\

\noindent Example $2$:\\
\noindent $p_1=7$, $p_2=9$, $p_3=5$, $q_1=2$, $r_1=4$, $r_2=1$,\\
\noindent Solution:\\
\noindent $257^\mu+458^\mu+167^\mu+548^\mu+347^\mu+368^\mu=3 \cdot (482^\mu+233^\mu)$; $\mu=1,3$.
\end{exam}

\begin{exam}
$a^\mu+b^\mu+c^\mu+d^\mu+e^\mu+f^\mu+g^\mu+h^\mu=2 \cdot (i^\mu+j^\mu+k^\mu+l^\mu)$; $\mu=1,3$.\\

\noindent Trivial parametric solutions (case $3$):\\

\noindent $X=(x_1,x_2,\cdots,x_{12})=$
\noindent $(p_1,-p_1,p_2,-p_2,p_3,-p_3,p_4,-p_4,q_1,-q_1,q_2,-q_2)$,\\
\noindent $Y=(y_1,y_2,\cdots,y_{12})=$
\noindent $(r_1,r_2,-r_1,-r_2,r_3,r_4,-r_3,-r_4,s_1,s_2,-s_1,-s_2)$,
\\

\noindent Example:\\
$p_1=2$, $p_2=5$, $p_3=7$, $p_4=6$, $q_1=5$, $q_2=3$, $r_1=7$, $r_2=8$,
$r_3=9$, $r_4=9$, $s_1=10$, $s_2=6$, $A=-593$, $B=1129$,\\
\noindent Solution:\\
\noindent $(-1893)^\mu+(-7002)^\mu+9796^\mu+(-901)^\mu+2566^\mu+(-13240)^\mu+12111^\mu+(-1437)^\mu=
2 \cdot [(-285)^\mu+(-9203)^\mu+9317^\mu+171^\mu]$; $\mu=1,3$
\end{exam}

\begin{exam} $a^\mu+b^\mu+c^\mu+ \cdots +m^\mu+n^\mu+o^\mu=5 \cdot (p^\mu+q^\mu+r^\mu)$; $\mu=1,3$.\\

\noindent Trivial parametric solutions (case $2$):\\
\noindent $X=(x_1,x_2, \cdots,
x_{18})=$

\noindent $(p_1,-p_1,p_2,-p_2,p_3,-p_3,p_4,-p_4,p_5,-p_5,p_6,-p_6,p_7,-p_7,0,q_1,-q_1,0)$,\\
\noindent $Y=(y_1,y_2, \cdots,y_{18})=$\\
\noindent $(r_1,r_2,-r_1,-r_2,r_3,r_4,-r_3,-r_4,r_5,r_6,-r_5,-r_6,r_7,0,-r_7,s_1,0,-s_1)$,\\

\noindent Example:\\
$p_1=1$, $p_2=3$, $p_3=5$, $p_4=4$, $p_5=1$, $p_6=2$, $p_7=3$, $q_1=2$, $r_1=2$, $r_2=3$, $r_3=5$, $r_4=1$, $r_5=6$, $r_6=6$,
, $r_7=7$, $s_1=3$,
$A=-19$, $B=-253$,\\
\noindent Solution:\\
\noindent $(-291)^\mu+196^\mu+(-721)^\mu+816^\mu+(-1360)^\mu+1246^\mu+(-917)^\mu+1031^\mu
+(-367)^\mu+139^\mu+(-392)^\mu+
620^\mu+(-892)^\mu+759^\mu+133^\mu=
5 \cdot [(-563)^\mu+506^\mu+57^\mu]$; $\mu=1,3$
\end{exam}

\begin{exam} $a^\mu+b^\mu+c^\mu+d^\mu=2 \cdot (e^\mu+f^\mu)$; $\mu=1,3$.\\

\noindent Trivial parametric solutions(case $5$):\\
\noindent $X=(u,u,v,v,u,v)$,\\
\noindent $Y=(-v,1,-1,v,-u,u)$,\\

\noindent nontrivial parametric solution:\\
\noindent $a=-u^2-uv+uv^3+2u^4-vu^2+v^3-v^4$,\\
\noindent $b=-u^2v^2-uv-uv^3+2u^4+2u^3v-u^2v-v^2+v^3+2u^3-2uv^2$,\\
\noindent $c=-v^4+2u^2v^2-v^3-2u^3-u^2-uv-uv^3+2u^3v+u^2v+2uv^2$,\\
\noindent $d=u^2v^2-3uv^3+4u^3v-uv-v^2+vu^2-v^3$,\\
\noindent $e=u^2v^2-u^2-u^3-uv-2uv^3+3u^3v+uv^2$,\\
\noindent $f=-uv-v^2-v^4+u^3v+u^3-uv^2+2u^4$.\\

\noindent Example:\\
\noindent $u=2$, $ v=1$, or $u=10$, $v=7$, or $u=100$, $v=11$, or $u=100$, $v=1$,\\
\noindent Solutions:\\
\noindent $1^\mu+4^\mu+5^\mu+8^\mu=2 \cdot (2^\mu+7^\mu)$; $\mu=1,3$.\\
\noindent $201^\mu+257^\mu+168^\mu+224^\mu=2 \cdot (180^\mu+245^\mu)$; $\mu=1,3$.\\
\noindent $900895^\mu+1002355^\mu+100874^\mu+202334^\mu=2 \cdot (148400^\mu+954829^\mu)$; $\mu=1,3$.\\
\noindent $1^\mu+201^\mu+10000^\mu+10200^\mu=2 \cdot (100^\mu+10101^\mu)$; $\mu=1,3$.\\
\end{exam}

\begin{exam} $a^\mu+b^\mu+c^\mu+d^\mu+e^\mu+f^\mu=2 \cdot (g^\mu+h^\mu+i^\mu)$; $\mu=1,3$.\\

\noindent Trivial parametric solutions (case: $4$, $1$):\\
\noindent $X=(1,v,u,v,-u,-1,v,u,-u)$\\
\noindent $Y=(v,v,u,1,-1,-u,-u,v,u)$,\\

\noindent nontrivial parametric solution:\\
\noindent $a=-3u^3+v^4-v+u+u^2+vu^2-uv+2v^2u-u^3v+2v^3u-2v^2u^2$,\\
\noindent $b= -4u^3v+4v^3u$,\\
\noindent $c=-4u^3+4v^2u^2$,\\
\noindent $d=-v^4+v-u^3-u^2-u+uv-3u^3v+2u^2v^2+2v^3u-u^2v+2v^2u$,\\
\noindent $e=3u^4-v-v^3-v^2+u+uv-2v^2u^2+v^2u+v^3u-2vu^3-2v^2u+2u^2v$,\\
\noindent $f=4u^3+u^4+v-u+v^2+v^3-3v^2u-2vu^2-uv-uv^3-2u^2v^2+2u^3v$,\\
\noindent $g=u^4+u^3+u^2-v^3-v^4-v^2-u^3v+v^3u+u^2v-uv^2$,\\
\noindent $h=-3u^4+u^2+u^3+v^2+v^3+v^4-2uv+u^3v+v^3u-v^2u-u^2v$,\\
\noindent $i=2u^4-2u^3-2u^2+2uv+2v^3u+2v^2u-4vu^3$.\\

\noindent Example:\\
\noindent $u=2$, $v=1$, or $u=2$, $v=10$, or $u=7$, $v=100$,\\
\noindent Solutions:\\
\noindent $(-8)^\mu+(-8)^\mu+(-16)^\mu+(-8)^\mu+11^\mu+13^\mu=2 \cdot [7^\mu+(-11)^\mu+(-4)^\mu]$; $\mu=1,3$.
\\
\noindent $563^\mu+320^\mu+64^\mu+(-211)^\mu+(-5)^\mu+(-91)^\mu=2 \cdot [(-388)^\mu+536^\mu+172^\mu];$ $\mu=1,3$.
\\
\noindent $173777^\mu+42800^\mu+2996^\mu+(-130549)^\mu+7510^\mu+(-10934)^\mu=2 \cdot [(-144557)^\mu+165839^\mu+21518^\mu]$; $\mu=1,3$.
\end{exam}

\begin{exam} $a^\mu+b^\mu+c^\mu=d^\mu+e^\mu+f^\mu$; $\mu=1,3$.\\

\noindent Trivial parametric solutions (case $2$):\\
\noindent $X=(u,v,1,u,v,1)$,\\
\noindent $Y=(v,-1,-u,-1,-u,v)$,\\

\noindent nontrivial parametric solution:\\
\noindent $a=-u^3-v^3+vu^3-v^3u-2uv +v^2u-vu^2+u^2-v^2$,\\
\noindent $b=-uv^3-vu^2+uv+v^2u^2+v^3-u^2v+u+u^2+v^2u+v$,\\
\noindent $c=-uv^2-v+u+vu^2+v^2-u^3v+v^2u+u^3+v^2u^2+uv$,\\
\noindent $d=-u^2v^2-uv-u^3+u^2+vu^3+2uv^2-u^2v+v^2+u+u^2+v$,\\
\noindent $e=-uv^3-v^2-vu^2+2uv+2u^2v^2+v^3-u^3v+uv^2+u^2+u^3$,\\
\noindent $f=-uv^2-v-u^2+u+u^2v^2-v^3-uv-v^3u$.\\

\noindent Example:\\
\noindent $u=10$, $v=3$,\\
\noindent Solution:\\
\noindent $762^\mu+145^\mu+251^\mu=195^\mu+377^\mu+586^\mu$; $\mu=1,3$.\\
\end{exam}

\begin{exam} $a^\mu+b^\mu+c^\mu+d^\mu+e^\mu+f^\mu+g^\mu+h^\mu+i^\mu=3 \cdot (j^\mu+k^\mu+l^\mu)$; $\mu=1,3$.\\

\noindent Trivial parametric solutions (case: $1$, $2$):\\
\noindent $X=(x_1,x_2,\cdots,x_{12})=$
\noindent $(u,v,-u,t,-s,-t,0,s,-v,t,0,-t)$,\\
\noindent $Y=(y_1,y_2,\cdots,y_{12})=$
\noindent $(v,t,-s,u,-t,s,-v,-u,0,0,t,-t)$,\\

\noindent Example $1$:\\
$u=1$, $v=2$, $s=3$, $t=10$,\\
\noindent Solution:\\
\noindent $1168^\mu+7346^\mu+(-2003)^\mu+(-4185)^\mu+(-6844)^\mu+7525^\mu+(-1670)^\mu+(-2341)^\mu+
1004^\mu=
3 \cdot [(-5020)^\mu+8350^\mu+(-3330)^\mu]$; $\mu=1,3$.
\\

\noindent Example $2$:\\
\noindent $u=1$, $v=10$, $s=20$, $ t=30$,\\
\noindent Solution:\\
\noindent $83515^\mu+208720^\mu+(-173005)^\mu+(-170301)^\mu+
(-148970)^\mu+358230^\mu+(-89490)^\mu+(-128449)^\mu+59750^\mu=3 \cdot [(-179250)^\mu+268470^\mu+(-89220)^\mu]$; $\mu=1,3$.
\end{exam}

\section{Producing another solution of the DE ${\displaystyle \sum_{i=1}^n x_{i} ^3=k \cdot \sum_{j=1}^\frac{n}{k} y_{j}^3 }$, when a solution is given.}

Let $X=(x_1,x_2, \cdots ,x_n ,y_1,y_2, \cdots ,y_\frac{n}{k})$ be a primitive solution for the DE

\begin{equation}\label{97}
\sum_{i=1}^n x_{i} ^3=k \cdot \sum_{j=1}^\frac{n}{k} y_{j}^3,
\end{equation}

\noindent under conditions that :

\noindent $x_1+x_2+ \cdots +x_n -ky_1-ky_2- \cdots -ky_\frac{n}{k}\neq 0$, and\\
\noindent $-2x_1^{2}-2x_2^{2}- \cdots -2x_n^{2} +2ky_1^{2}+2ky_2^{2}+ \cdots +2ky_\frac{n}{k}^{2}\neq 0$.\\

\noindent (These conditions will be necessary because we wish the other solution not to be trivial.)
\\

\noindent We try to find another solution by using the first solution.\\
\noindent Define two variables function:

\noindent $F(x,y):=(2x_1x+y)^3+(2x_2x+y)^3+ \cdots +(2x_nx+y)^3$

\noindent $-k(2y_1x+y)^3-k(2y_2x+y)^3- \cdots -k(2y_\frac{n}{k}x+y)^3$.
\\

\noindent (Also, we may define: $F(x,y):=(tx_1x+y)^3+(tx_2x+y)^3+ \cdots +(tx_nx+y)^3$

\noindent $-k(ty_1x+y)^3-k(ty_2x+y)^3- \cdots -k(ty_\frac{n}{k}x+y)^3$, where $t$ is an arbitrary integer.)
\\

\noindent It is clear that if $F(x,y)=0$, then\\
\noindent $(2x_1x+y,2x_2x+y, \cdots ,2x_nx+y,2y_1x+y,2y_2x+y, \cdots ,2y_\frac{n}{k}x+y)$,\\
\noindent is another solution for the DE \eqref{97}.
\noindent We have

\noindent $F(x,y)=6xy[(2x_1^2+2x_2^2+ \cdots +2x_n^2-2ky_1^2- \cdots -2ky_\frac{n}{k}^2)x+
\\
(x_1+x_2+ \cdots +x_n-ky_1-ky_2- \cdots -ky_\frac{n}{k})y].$
\\

\noindent Therefore if we define\\
\noindent $x:=x_1+x_2+ \cdots +x_n-ky_1-ky_2- \cdots -ky_\frac{n}{k}\neq0$,

\noindent and

\noindent $y:=-2x_1^2-2x_2^2- \cdots -2x_n^2+2ky_1^2+ \cdots +2ky_\frac{n}{k}^2\neq0$,\\
\noindent then we get $F(x,y)=0$, which gives rise to another solution for the DE \eqref{97}:
\\

\noindent $x'_1=2x_1x+y=2x_1(x_1+x_2+ \cdots +x_n-ky_1-ky_2- \cdots -ky_\frac{n}{k})+$

\noindent $(-2x_1^2-2x_2^2- \cdots -2x_n^2+2ky_1^2+ \cdots +2ky_\frac{n}{k}^2),$
\\

\noindent $x'_2=2x_2x+y=2x_2(x_1+x_2+ \cdots +x_n-ky_1-ky_2- \cdots -ky_\frac{n}{k})+$

\noindent $(-2x_1^2-2x_2^2- \cdots -2x_n^2+2ky_1^2+ \cdots +2ky_\frac{n}{k}^2),$

\vdots

\noindent $x'_n=2x_nx+y=2x_2(x_1+x_2+ \cdots +x_n-ky_1-ky_2- \cdots -ky_\frac{n}{k})+$

\noindent $(-2x_1^2-2x_2^2- \cdots -2x_n^2+2ky_1^2+ \cdots +2ky_\frac{n}{k}^2),$
\\

\noindent $y'_1=2y_1x+y=2y_1(x_1+x_2+ \cdots +x_n-ky_1-ky_2- \cdots -ky_\frac{n}{k})+$

\noindent $(-2x_1^2-2x_2^2- \cdots -2x_n^2+2ky_1^2+ \cdots +2ky_\frac{n}{k}^2),$
\\

\noindent $y'_2=2y_2x+y=2y_2(x_1+x_2+ \cdots +x_n-ky_1-ky_2- \cdots -ky_\frac{n}{k})+$

\noindent$(-2x_1^2-2x_2^2- \cdots -2x_n^2+2ky_1^2+ \cdots +2ky_\frac{n}{k}^2),$
\\

\vdots

\noindent $y'_\frac{n}{k}=2y_\frac{n}{k}x+y=2y_2(x_1+x_2+ \cdots +x_n-ky_1-ky_2- \cdots -ky_\frac{n}{k})+$

\noindent $(-2x_1^2-2x_2^2- \cdots -2x_n^2+2ky_1^2+ \cdots +2ky_\frac{n}{k}^2)$.
\\

\noindent By continuing this method for the new obtained solution, we get infinitely many solutions for the DE \eqref{97}. This means that we may get infinitely many nontrivial solutions for the DE \eqref{97} by using a given solution.\\
If the primitive solution $X$ be a parametric solution, we get another parametric solution for the DE \eqref{97}.
\begin{exam} $a^3+b^3+c^3=A^3+B^3+C^3$\\

\noindent Let $(a,b,c, A, B,C)$ be a solution for the above DE.
\noindent We find another solution by using the first solution.\\

\noindent Example $1$: $(a,b,c,A,B,C)=(1,155,209,-41,227,107)$,\\
\noindent Another solution:\\
\noindent $125^3+169^3+250^3=62^3+277^3+97^3$.
\\

\noindent Example $2$: $(a,b,c,A,B,C)=(-62,169,250,-125,277,97),$\\
\noindent Another solution:\\
\noindent $81^3+12555^3+16929^3=(-3321)^3+18387^3+8667^3.$\\
\end{exam}

\section{The DE ${\displaystyle \sum_{i=1}^n p_{i} \cdot x_{i}^{a_i}=\sum_{j=1}^m q_{j} \cdot y_{j}^{b_j}}$;}
In this section, we prove the second main theorem. Let the above DE be in the form:\\
\noindent $ x_{1}^{a_1}+ p_{2}x_{2}^ {a_2}+ p_{3}x_{3}^{a_3}+ \cdots + p_{n}x_{n}^{a_n}=q_{1}y_{1}^{b_1}+ \cdots +q_{m}y_{m}^{b_m}$,\\
\noindent where $ (a_{1}, a_{2} \cdots a_{n} \cdot b_{1} \cdots b_{m})=1 $.\\

\noindent Then we have:
\begin{equation}\label{7}
x_{1}^ {a_1}=q_{1}y_{1}^{b_1}+ \cdots +q_{m}y_{m}^{b_m}- p_{2}x_{2}^{a_2}- p_{3}x_{3}^{a_3}- \cdots - p_{n}x_{n}^{a_n}.
\end{equation}

\noindent Define:\\ $m:=q_{1}s_{1}^{b_1}+ \cdots +q_{m}s_{m}^{b_m}- p_{2}t_{2}^{a_2}- p_{3}t_{3}^{a_3}- \cdots - p_{n}t_{n}^{a_n}$,\\
\noindent where $t_{i}$ and $ s_{i} $ are arbitrary integers. Now we introduce the our parametric solution:
\\

\noindent $y_{1}=s_{1} \cdot m^\frac{k}{b_{1}}$,
\\
$y_{2}=s_{2} \cdot m^\frac{k}{b_{2}}$,
\\
$\vdots$
\\
$y_{m}=s_{m} \cdot m^\frac{k}{b_{m}}$,
\\
$ x_{1}=m^\frac{k+1}{a_{1}}$,
\\
$ x_{2}=t_{1} \cdot m^\frac{k}{a_{2}}$,
\\
$\vdots$
\\
$ x_{n}=t_{n} \cdot m^\frac{k}{a_{n}}$,
\\

\noindent where $k$ is a natural number such that
\\

\noindent $k\equiv 0 \pmod {b_{1}}$,
\\
$k\equiv 0 \pmod {b_{2}}$,
\\
$\vdots$
\\
$k\equiv 0 \pmod {b_{m}}$,
\\
$k\equiv 0 \pmod {a_{2}}$,
\\
$\vdots$
\\
$k\equiv 0 \pmod {a_{n}}$,
\\
$k\equiv -1 \pmod {a_{1}}$.
\\

\noindent From the Chinese remainder theorem, we know that there exists a solution for $k$, since $(a_{1},a_{2} \cdots a_{n} \cdot b_{1} \cdots b_{m})=1$, that is, the all exponents used in $x_{i}$ and $y_{j}$
are natural numbers.
We claim that this is a parametric solution for the DE \eqref{7}:
\\

\noindent $q_{1}y_{1}^{b_1}+ \cdots +q_{m}y_{m}^{b_m}- p_{2}x_{2}^{a_2}- p_{3}x_{3}^{a_3}- \cdots - p_{n}x_{n}^{a_n}=$\\

\noindent $q_{1} \cdot (s_{1}m^\frac{k}{{b_1}})^{b_1}+q_{2} \cdot (s_{2}m^\frac{k}{b_{2}})^{b_2}+ \cdots +q_{m} \cdot (s_{m}m^\frac{k}{b_{m}})^{b_m} -p_{2}(t_{2} \cdot m^\frac{k}{a_{2}})^{a_2}-$\\

\noindent $ \cdots -p_{n}(t_{n} \cdot m^\frac{k}{a_{n}})^{a_n}=$\\

\noindent $q_{1} \cdot s_{1}^{b_1} \cdot m^k+q_{2} \cdot s_{2}^{b_2} \cdot m^k+ \cdots +q_{m} \cdot s_{m}^{b_m} \cdot m^k-p_{2} \cdot t_{2}^{a_2} \cdot m^k- \cdots -p_{n} \cdot t_{n}^{a_n} \cdot m^k=$\\

\noindent $m^k(q_{1} \cdot s_{1}^{b_1}+
q_{2} \cdot s_{2}^{b_2}+ \cdots +q_{m} \cdot s_{m}^{b_m}-p_{2} \cdot t_{2}^{a_2}- \cdots -p_{n} \cdot t_{n}^{a_n})=m^k \cdot m=$
\\

\noindent $m^{k+1}=(m^\frac{k+1}{a})^a=x_{1}^a.$
\\

\noindent Now the proof of the second main theorem is completed. Since $s_{i}$, and $ t_{i}$ were arbitrary, we also obtain infinitely many solutions for the above DE.
\begin{exam}
The DE $x_{1}^5+x_{2}^6=y_{1}^7+y_{2}^8+y_{3}^9$, has infinitely many solutions in integers.
\\
Since we have: $x_{1}^5=y_{1}^7+y_{2}^8+y_{3}^9-x_{2}^6 $ and $(5,6 \cdot 7 \cdot 8 \cdot 9)=1$, then by using the previous theorem, we conclude that the aforementioned DE has infinitely many solutions in integers.
Put: $m:=r^7+s^8+t^9-w^6$, where $r$, $s$, $t$ and $w$ are arbitrary integers. As an example, if we let: $(r,s,t,w)=(3,2,1,3)$ and $k=504$, then we get:
\\
$m=1715$,
\\
$y_{1}=r \cdot m^\frac{k}{7}=3 \cdot 1715^{72}$,
\\
$y_{2}=s \cdot m^\frac{k}{8}=2 \cdot 1715^{63}$,
\\
$y_{3}=t \cdot m^\frac{k}{9}=1715^{56}$,
\\
$x_{1}=m^\frac{k+1}{5}=1715^{101}$,
\\
$x_{2}=w \cdot m^\frac{k}{6}=3 \cdot 1715^{84}$.
\\

\noindent Namely, we have:\\
$(1715^{101})^5+(3 \cdot 1715^{84})^6=(3 \cdot 1715^{72})^7+(2 \cdot 1715^{63})^8+(1715^{56})^9.$
\\

\noindent By letting $(r,s,t,w)=(3,2,2,2)$ and $k=504$, we get:\\
$(2891^{101})^5+(2 \cdot 2891^{84})^6=(3 \cdot 2891^{72})^7+(2 \cdot 2891^{63})^8+(2 \cdot 2891^{56})^9.$\\

\noindent By changing $(r,s,t,w)$, we obtain infinitely many solutions.
\end{exam}

\begin{exam}

The DE $x^5=6(y_{1}^7+y_{2}^7+y_{3}^7)$ has infinitely many solutions in integers. If we put: $m:=6r^7+6s^7+6t^7$, where $(r,s,t)=$ $(1,1,1)$ and $k=14$, we get:
\\
$m=18$,
\\
$y_{1}=r \cdot m^\frac{k}{7}=18^2$,
\\
$y_{2}=s \cdot m^\frac{k}{7}=18^2$,
\\
$y_{3}=t \cdot m^\frac{k}{7}=18^2$,
\\
$x=m^\frac{k+1}{5}=18^3$.
\\

\noindent Namely, we have: $(18^3)^5=6((18^2)^7+(18^2)^7+(18^2)^7)$.
\\

\noindent By letting: $(r,s,t)=(3,2,5)$ and $k=14$, we obtain:\\
$(482640^3)^5=6((3 \cdot 482640^2)^7+(2 \cdot 482640^2)^7+(5 \cdot 482640^2)^7)$.

\end{exam}

\begin{exam}
The DE $x^7=13y_{1}^{11}+ 11y_{2}^{13}$
has infinitely many solutions in integers. If we put: $m:=13r^{11}+11s^{13}$, where $(r,s,t)=(1,1,1)$ and $k=286$, we get:
\\
$m=24$,
\\
$y_{1}=r \cdot m^\frac{k}{11}=24^{26}$,
\\
$y_{2}=s \cdot m^\frac{k}{13}=24^{22}$,
\\
$x=m^\frac{k+1}{7}=24^{41}$.
\\

\noindent Namely, we have: $(24^{41})^7=13 \cdot (24^{26})^{11}+11 \cdot (24^{22})^{13}$.
\end{exam}

\begin{exam}
Is the DE $x_{1}^{5}+x_{2}^{5}=y_{1}^{6}+y_{2}^{6}$
solvable? Yes. To do this, let $ y_{1}=y_{2}$. Then we have: $x_{1}^{5}+x_{2}^{5}=2y_{1}^{6}$. We may let $x_{1}=2u$, $x_{2}=2v$. So we get: $y_{1}^{6}=2^4(u^5+v^5)$, that is solvable.
\end{exam}
\section{The DE $x^a+y^b=z^c$}
In this part, we solve the above DE, where $a$, $b$, $c$ are fixed natural numbers and $x$, $y$, $z$ are variables. We have three cases: $(a,b,c)=1$ , $(a,b,c)\geq3$ or $(a,b,c)=2$.
If $(a,b,c)=1$, by using the second main theorem, we proved that the DE has infinitely many solutions in integers.
If $(a,b,c)\geq3$, the Fermat last theorem says that the DE dose not have any solutions in integers. Then it suffices to study the case $(a,b,c)=2$. However, we know that the Diophantine equations $x^4+y^4=z^2$ and $x^4-y^4=z^2$ have not any solutions in integers. So it suffices only to study the case where at most one of $a$, $b$, $c$ is divisible by $4$. In the sequel, we study this case by proving several theorems.
\begin{theorem}
The DE $x^2+y^{2B} =z^{2C}$, where $B$ and $C$ are both odd and $(B,C)=1$, has infinitely many solutions in integers.
\end{theorem}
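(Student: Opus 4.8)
The plan is to reduce the problem to scaling a single Pythagorean triple. Writing the equation as $x^2+(y^B)^2=(z^C)^2$, I observe that a solution is nothing but a Pythagorean triple $(x,\,y^B,\,z^C)$ in which the middle entry is a perfect $B$-th power and the largest entry is a perfect $C$-th power. So I would start from a fixed primitive Pythagorean triple, say $(x_0,y_0,z_0)=(3,4,5)$ with $x_0^2+y_0^2=z_0^2$, and multiply the identity through by $\lambda^2$ to get $(\lambda x_0)^2+(\lambda y_0)^2=(\lambda z_0)^2$. The whole game is then to choose the scaling factor $\lambda$ so that $\lambda y_0$ becomes a perfect $B$-th power and $\lambda z_0$ becomes a perfect $C$-th power simultaneously; setting $x=\lambda x_0$ finishes the triple.

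To secure the $B$-th power condition for free, I would take $\lambda=y_0^{\,B-1}\mu^B$ for a parameter $\mu$ still to be chosen, so that $\lambda y_0=(y_0\mu)^B$ is automatically a $B$-th power and $y:=y_0\mu$. It then remains to force $\lambda z_0=K\mu^B$ to be a perfect $C$-th power, where I abbreviate $K:=y_0^{\,B-1}z_0$. Writing $\mu=K^r\nu^C$ with a further free parameter $\nu$ gives $K\mu^B=K^{1+rB}\nu^{CB}$, which is a perfect $C$-th power precisely when $1+rB\equiv 0\pmod{C}$.

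This congruence $rB\equiv -1\pmod{C}$ is the key step, and it is solvable for $r$ exactly because $\gcd(B,C)=1$ makes $B$ invertible modulo $C$. Choosing $r$ to be the least nonnegative solution and writing $1+rB=\ell C$ with $\ell\ge 1$, one gets $\lambda z_0=(K^{\ell}\nu^B)^C$, so $z:=K^{\ell}\nu^B$. Substituting back, $x^2+y^{2B}=\lambda^2(x_0^2+y_0^2)=\lambda^2 z_0^2=z^{2C}$, which verifies the identity. Varying the free parameter $\nu$ over the nonzero integers (and, if one wishes, varying the seed triple) then yields infinitely many solutions in nonzero integers, completing the argument.

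I expect the only genuine obstacle to be the solvability and bookkeeping of the exponent congruence $rB\equiv -1\pmod{C}$ together with the check that all resulting exponents are bona fide nonnegative integers; both are immediate from $\gcd(B,C)=1$ (the hypotheses that $B$ and $C$ are odd are not actually needed for this construction and enter only through the reduction of $x^a+y^b=z^c$ with $(a,b,c)=2$ to the present shape). This is the same ``multiply by a power whose exponent is pinned down by a coprimality congruence'' mechanism already used in the proof of the second main theorem for equation \eqref{7}, so no new machinery is required.
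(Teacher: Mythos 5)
Your proof is correct, but it takes a genuinely different route from the paper. The paper keeps the full Pythagorean parametrization: it writes $x^2+(y^B)^2=(z^C)^2$, sets $x=m^2-n^2$, $y^B=2mn$, $z^C=m^2+n^2$, specializes $m=2^{B-1}t_1^B$, $n=t_2^B$ so that $2mn$ is automatically a $B$-th power, and is then left with the equation $z^C=2^{2(B-1)}t_1^{2B}+t_2^{2B}$, which it solves by invoking the second main theorem (legitimate since $(C,2B)=1$, using that $C$ is odd). You instead bypass the $m,n$ parametrization entirely: you take one fixed triple $(3,4,5)$ and scale it by $\lambda=y_0^{B-1}\mu^B$ with $\mu=K^r\nu^C$, pinning down $r$ by the congruence $rB\equiv-1\pmod{C}$. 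Your computation checks out: $\lambda y_0=(y_0\mu)^B$ and $\lambda z_0=K^{1+rB}\nu^{BC}=(K^{\ell}\nu^B)^C$, so the scaled triple has the required shape, and varying $\nu$ gives infinitely many nonzero solutions. What each approach buys: yours is more elementary and self-contained (a single explicit congruence rather than an appeal to the second main theorem, though the underlying ``adjust the exponent by coprimality'' mechanism is the same), and, as you note, it never uses the oddness of $B$ and $C$ --- only $(B,C)=1$ --- so it proves a slightly stronger statement; the paper's version retains the two free parameters $t_1,t_2$ inside the Pythagorean parametrization and so produces a structurally richer family of solutions, and it is the form that the paper's later theorems (on $x^{2A}+y^{2B}=z^{2C}$ and $x^{4A}+y^{2B}=z^{2C}$) reuse as a building block. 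Both constructions produce highly non-primitive solutions, which is all the theorem requires.
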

\begin{proof}
: we have $x^2+(y^{B})^2 =(z^{C})^2$, then we get:\\
$x=m^2-n^2$,
\\
$ y^{B}=2mn$,
\\
$z^C=m^2+n^2$.
\\
We may suppose that $m=2^{B-1} \cdot t_{1}^{B}$, $ n=t_{2}^{B}$. By plugging these into the equations, we get :
\\
$y=2t_{1}t_{2}$,
\\
$x=2^{2(B-1)} \cdot t_{1}^{2B}-t_{2}^{2B}$,
\\
$z^{C}=2^{2(B-1)} \cdot t_{1}^{2B}+t_{2}^{2B}$.
\\

\noindent Since $(C,2B)=1$, the DE $z^{C}=2^{2(B-1)} \cdot t_{1}^{2B}+t_{2}^{2B}$ is solvable for $z$, $t_{1}$, $t_{2}$ from the second main theorem. Also $x$ and $y$ are computed from
$y=2t_{1}t_{2}$,
$x=2^{2(B-1)} \cdot t_{1}^{2B}-t_{2}^{2B}$, as well.
The proof is completed.
\end{proof}
\begin{theorem}
The DE $x^{2A}+y^{2B} =z^{2C}$, where $A$, $B$, $C$ are all odd and $(A,C)=1$ and $(AC,B)=1$ has infinitely many solutions in integers.
\end{theorem}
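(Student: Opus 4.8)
The plan is to rewrite the equation in its Pythagorean form $(x^{A})^{2}+(y^{B})^{2}=(z^{C})^{2}$ and then, instead of parametrising the triple by $x^{A}=m^{2}-n^{2},\ y^{B}=2mn,\ z^{C}=m^{2}+n^{2}$ as in Theorem~4.1, to fix one concrete triple and merely scale it. A naive imitation of the previous proof would leave me with the two coupled conditions $x^{A}=m^{2}-n^{2}$ and $z^{C}=m^{2}+n^{2}$ to satisfy simultaneously, and the second main theorem only handles a single equation at a time; this is exactly the obstacle the scaling trick is designed to avoid. Concretely I would start from $3^{2}+4^{2}=5^{2}$, multiply by $\lambda^{2}$ to get $(3\lambda)^{2}+(4\lambda)^{2}=(5\lambda)^{2}$, and seek $\lambda$ making $3\lambda$ a perfect $A$-th power, $4\lambda$ a perfect $B$-th power and $5\lambda$ a perfect $C$-th power; then $x=(3\lambda)^{1/A}$, $y=(4\lambda)^{1/B}$, $z=(5\lambda)^{1/C}$ are integers solving the equation.

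First I would write $\lambda=2^{\alpha}3^{\beta}5^{\gamma}$, so that the three ``perfect power'' requirements become divisibility conditions on the exponents. Since $3\lambda=2^{\alpha}3^{\beta+1}5^{\gamma}$, $4\lambda=2^{\alpha+2}3^{\beta}5^{\gamma}$ and $5\lambda=2^{\alpha}3^{\beta}5^{\gamma+1}$, asking these to be an $A$-th, $B$-th and $C$-th power respectively splits into three independent simultaneous congruence systems, one per exponent: $\alpha\equiv 0\pmod{A}$, $\alpha\equiv -2\pmod{B}$, $\alpha\equiv 0\pmod{C}$; next $\beta\equiv -1\pmod{A}$, $\beta\equiv 0\pmod{B}$, $\beta\equiv 0\pmod{C}$; and finally $\gamma\equiv 0\pmod{A}$, $\gamma\equiv 0\pmod{B}$, $\gamma\equiv -1\pmod{C}$.

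The main step, and the point that needs genuine care, is the simultaneous solvability of these three systems by the Chinese Remainder Theorem, since this is precisely where the hypotheses $(A,C)=1$ and $(AC,B)=1$ do their work (together they force $A,B,C$ to be pairwise coprime). For $\alpha$ the two ``$0$'' congruences combine to $\alpha\equiv 0\pmod{AC}$, which is compatible with $\alpha\equiv -2\pmod{B}$ exactly because $(AC,B)=1$; the systems for $\beta$ and $\gamma$ are solved the same way using $(A,BC)=1$ and $(AB,C)=1$, both of which again follow from pairwise coprimality. It is worth noting that this coprimality pattern is exactly calibrated: if one tried the forbidden $x^{4}+y^{4}=z^{2}$ (so $A=B=2$, $C=1$) the hypothesis $(AC,B)=1$ would fail, so the construction correctly refuses to manufacture a solution to a Fermat-impossible equation. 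The oddness of $A,B,C$ is not needed for the CRT argument itself; it only places us in the relevant subcase in which none of $2A,2B,2C$ is divisible by $4$.

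Finally, once one admissible triple $(\alpha,\beta,\gamma)$ is found, replacing $\alpha$ by $\alpha+ABC$ (or shifting $\beta$ or $\gamma$ by a multiple of $ABC$) preserves every congruence while strictly increasing $\lambda$, hence $x,y,z$; this produces infinitely many distinct solutions in nonzero integers and completes the proof. As a sanity check, for $A=3$, $B=C=1$ the systems give $\lambda=9$, yielding $x=3$, $y=36$, $z=45$ and the valid identity $3^{6}+36^{2}=45^{2}$.
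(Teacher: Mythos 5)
Your proof is correct, but it takes a genuinely different route from the paper's. The paper reduces $x^{2A}+y^{2B}=z^{2C}$ to finding a solution of $X^{A}+Y^{B}=Z^{C}$ in which $X$, $Y$, $Z$ are all perfect squares: it takes the explicit solution of $X^{A}+Y^{B}=Z^{C}$ supplied by the second main theorem (built from $m:=r^{C}-s^{A}$), observes that $X,Y,Z$ are squares whenever $m,r,s$ are, and so is led to the auxiliary equation $M^{2}=R^{2C}-S^{2A}$, which it solves by invoking the immediately preceding theorem (the Pythagorean parametrization of $x^{2}+y^{2B}=z^{2C}$, where $(A,C)=1$ is used). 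Your argument bypasses this entire chain: you fix the single triple $3^{2}+4^{2}=5^{2}$, scale by $\lambda^{2}$ with $\lambda=2^{\alpha}3^{\beta}5^{\gamma}$, and use the Chinese Remainder Theorem on the three exponent systems, where the pairwise coprimality of $A,B,C$ (equivalent to the stated hypotheses $(A,C)=1$ and $(AC,B)=1$) is exactly what makes each system consistent. I checked the congruences and the sample computation $3^{6}+36^{2}=45^{2}$; both are right, and shifting an exponent by $ABC$ does give infinitely many distinct solutions. What your approach buys is economy and generality: it is self-contained (no appeal to the second main theorem or to the previous theorem), and, as you note, it never uses the oddness of $A,B,C$, so it proves a slightly stronger statement; the oddness only matters for where the theorem sits in the paper's case analysis of $x^{a}+y^{b}=z^{c}$ with $(a,b,c)=2$. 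What the paper's approach buys is uniformity: the same ``make the previous solution consist of squares'' device is reused for the next theorem ($x^{4A}+y^{2B}=z^{2C}$), whereas your scaling trick would need the extra exponent $2$ threaded through the congruences there.
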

We note that in this case $(a,b,c)=(2A,2B,2C)=2$ and none of $a=2A$, $b=2B$ and $c=2C$ is divisible by $4$. We wish to solve $x^a+y^b=z^c$ in the case of $(a,b,c)=2$ and at most one of $a$, $b$, $c$ is divisible by $4$, this theorem is one of the desired cases where $(a,b,c)=2$ and none of $a$, $b$ and $c$ is divisable by $4$.
\begin{proof}
: We know that the DE $X^A+Y^B=Z^C$ with the condition $(AC,B)=1$ is solvable and its solution is:
\\
$m:=r^C-s^A$,
\\
$Z=m^{\frac{k}{C}} \cdot r$,
\\
$X=m^{\frac{k}{A}} \cdot s$,
\\
$Y=m^{\frac{k+1}{B}}$,
\\
and
\\
$k\equiv0 \pmod {AC}$,
\\
$k\equiv-1 \pmod{ B}$,
\\
where $r$ and $s$ are arbitrary integers.
Now, if we obtain a solution for the DE $X^A+Y^B=Z^C$ from the solution just introduced, where $X$, $Y$ and $Z$ are squares, then we get a solution for the DE $x^{2A}+y^{2B} =z^{2C}$ by putting $X=x^2$, $Y=y^2$ and $Z=z^2$.
From the above solution for $X$, $Y$ and $Z$ , we see that if $m$, $r$ and $s$ be squares, then $X$, $Y$ and $Z$ will be squares, as well.
Then since $m$, $r$ and $s$ are related together with $m:= r^C-s^A$, we see that it suffices to solve the DE $M^2=R^{2C}-S^{2A}$, where we set:
\\
$m=M^2$,
\\
$r=R^2$,
\\
$s=S^2$.
\\
(we wanted $m$, $r$ and $s$ to be squares).
Fortunately the DE $M^2=R^{2C}-S^{2A}$ is solvable from the previous theorem due to $(A,C)=1$. Then we obtain $m$, $r$, $s$, and in the end we get $x$, $y$, $z$, and the proof is completed.
\end{proof}
\begin{theorem}
The DE $x^{4A}+y^{2B} =z^{2C}$, where $B$ and $C$ are both odd and $(A,C)=1$ and $(AC,B)=1$ has infinitely many solutions in integers.
\end{theorem}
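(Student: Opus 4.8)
The plan is to follow the same descent used in the previous theorem, but now arranging for $X$ to be a perfect fourth power while $Y$ and $Z$ remain perfect squares. First I would solve the reference equation $X^{A}+Y^{B}=Z^{C}$ by the second main theorem, which is legitimate because $(AC,B)=1$, and record its solution in the form
\[
m:=r^{C}-s^{A},\qquad X=m^{k/A}s,\qquad Y=m^{(k+1)/B},\qquad Z=m^{k/C}r,
\]
with $k\equiv 0\pmod{AC}$, $k\equiv -1\pmod{B}$, and $r,s$ free. The point is that if $X$ is a fourth power and $Y,Z$ are squares, then $x:=X^{1/4}$, $y:=Y^{1/2}$, $z:=Z^{1/2}$ satisfy $x^{4A}=X^{A}$, $y^{2B}=Y^{B}$, $z^{2C}=Z^{C}$, i.e. exactly the target equation.

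The delicate step, and the one I expect to be the main obstacle, is forcing $X=m^{k/A}s$ to be a fourth power. The naive imitation of the square case, taking $m,r,s$ all fourth powers, collapses the constraint $m=r^{C}-s^{A}$ to $M^{4}+S^{4A}=R^{2C}$, that is $u^{4}+v^{4}=w^{2}$, which has no nonzero integer solutions; this classical Fermat obstruction must be sidestepped. I would therefore take $m=M^{2}$ and $r=R^{2}$ only as squares, take $s=S^{4}$ a fourth power, and exploit the freedom in $k$ by imposing the extra congruence $2A\mid k$, so that $k/A$ is even. Writing $k/A=2\ell$ then gives $X=M^{4\ell}S^{4}=(M^{\ell}S)^{4}$, a genuine fourth power, while $Y=(M^{(k+1)/B})^{2}$ and $Z=(M^{k/C}R)^{2}$ are automatically squares because $m=M^{2}$. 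I would then check that the augmented system $2A\mid k$, $C\mid k$, $k\equiv -1\pmod{B}$ has a solution via the Chinese remainder theorem: the hypotheses ($B,C$ odd, $(A,C)=1$, $(AC,B)=1$) make $2A,B,C$ pairwise coprime, so a suitable $k$ exists.

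With these choices the only surviving constraint $m=r^{C}-s^{A}$ becomes
\[
M^{2}+S^{4A}=R^{2C},
\]
and it remains to produce infinitely many integer solutions of this Pythagorean-type equation. I would solve it exactly as in the first theorem of this section: write it as $M^{2}+(S^{2A})^{2}=(R^{C})^{2}$, parametrise $S^{2A}=2pq$, $R^{C}=p^{2}+q^{2}$, $M=p^{2}-q^{2}$, and set $p=2^{2A-1}t_{1}^{2A}$, $q=t_{2}^{2A}$ so that $S=2t_{1}t_{2}$. This leaves the single equation $R^{C}=2^{2(2A-1)}t_{1}^{4A}+t_{2}^{4A}$, which the second main theorem solves for $R,t_{1},t_{2}$ precisely because $(C,4A)=1$ (here I use that $C$ is odd and $(A,C)=1$). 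Letting $t_{1},t_{2}$ range over the integers yields infinitely many triples $(M,S,R)$, hence infinitely many $(x,y,z)$, which completes the argument. The whole difficulty is concentrated in the asymmetric choice of powers together with the parity adjustment of $k$, which is exactly what converts the forbidden $u^{4}+v^{4}=w^{2}$ into the solvable Pythagorean shape $M^{2}+S^{4A}=R^{2C}$.
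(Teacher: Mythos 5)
Your proposal is correct and is essentially the paper's own argument: both reduce the problem to the single constraint $M^{2}+S^{4A}=R^{2C}$ and solve it via the Pythagorean parametrization $M^{2}+(S^{2A})^{2}=(R^{C})^{2}$ followed by the second main theorem applied to $R^{C}=2^{2(2A-1)}t_{1}^{4A}+t_{2}^{4A}$, which works because $(C,4A)=1$. The only (cosmetic) difference is the intermediate equation: the paper passes through $X^{2A}+Y^{B}=Z^{C}$ and takes $m,r,s$ all squares, whereas you pass through $X^{A}+Y^{B}=Z^{C}$ with $s$ a fourth power and the extra congruence $2A\mid k$; these yield the same parametric family.
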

\begin{proof}
: We know that the DE $X^{2A}+Y^B=Z^C$ with the condition $(2AC,B)=1$ is solvable and its solution is:
\\
$m:=r^C-s^{2A}$,
\\
$Z=m^{\frac{k}{C}} \cdot r$,
\\
$X=m^{\frac{k}{2A}} \cdot s$,
\\
$Y=m^{\frac{k+1}{B}}$,
\\
and
\\
$k\equiv0 \pmod {C}$,
\\
$k\equiv0 \pmod {2A}$,
\\
$k\equiv-1 \pmod {B}$,
\\
where $r$ and $s$ are arbitrary integers.
Now, if we obtain a solution for the DE $X^{2A}+Y^B=Z^C$ from the solution just introduced, where $X$, $Y$ and $Z$ are squares, next we get a solution for the DE $x^{4A}+y^{2B} =z^{2C}$ by putting $x=X^2$, $y=Y^2$ and $z=Z^2 $.
From the above solution for $X$, $Y$, and $Z$, we see that if $m$, $r$ and $s$ be squares, then $X$, $Y$, and $Z$ will be squares, as well.
Then since m, r and s are related together with $m:=r^C-s^{2A}$, we see that it suffices to solve the DE $M^2=R^{2C}-S^{4A}$, where we set:
\\
$m=M^2$,
\\
$r=R^2$,
\\
$s=S^2$.
\\
(We wanted $m$, $r$ and $s$ to be squares).
Fortunately the DE $M^2=R^{2C}-S^{4A}$ is solvable from the previous theorems:
we have $M^2+(S^{2A})^2 =(R^{C})^2$, and then we get:
\\
$M=t_{1}^2-t_{2}^2$,
\\
$ S^{2A}=2t_{1}t_{2}$,
\\
$R^C=t_{1}^2+t_{2}^2$.
\\
We may suppose that $t_{1}=2^{2A-1} \cdot p^{2A}$, $ t_{2}=q^{2A}$. By substituting these in the above equations, we get :
\\
$S=2pq$,
\\
$M=2^{2(2A-1)} \cdot p^{4A}-q^{4A}$,
\\
$R^{C}=2^{2(2A-1)} \cdot p^{4A}+q^{4A}$.
\\
Since $(C,4A)=1$, the DE $R^{C}=2^{2(2A-1)} \cdot p^{4A}+q^{4A}$ is solvable for $R$, $p$ and $q$, from the second main theorem. And $M$, and $S$ are computed from
$S=2pq$,
$M=2^{2(2A-1)} \cdot p^{4A}-q^{4A}$, as well,
the proof is completed.
\end{proof}
We note that the previous theorem is the case that in the DE $x^a+y^b=z^c$, exactly one of $a$ or $b$ is divisible by $4$, and $(a,b,c)=2$.
\begin{exam}
We wish to solve the DE $x^{6}+y^{10}=z^{14}$.
We know that the DE $x^{3}+y^{5}=z^{7}$ is solvable and its solution is:
\\
$m:=r^7-s^3$,
\\
$z=m^{\frac{k}{7}} \cdot r$,
\\
$x=m^{\frac{k}{3}} \cdot s$,
\\
$y=m^{\frac{k+1}{5}}$,
\\
and
\\
$k\equiv0 \pmod {21}$,
\\
$k\equiv-1 \pmod {5}$,
\\
where $r$, and $s$ are arbitrary integers.\\
\noindent Now, if we obtain a solution for the DE $x^3+y^5=z^7$ from the solution just introduced, where $x$, $y$ and $z$ are squares, then we get a solution for the DE $x^6+y^{10} =z^{14}$.
From the above solution for $x$, $y$ and $z$, we see that if $m$, $r$ and $s$ be squares, then $x$, $y$ and $z$ will be squares, as well.
Since $m:=r^7-s^3$, we see that it suffices to solve the DE $M^2=R^{14}-S^{6}$, where:
\\
$m=M^2$,
\\
$r=R^2$,
\\
$s=S^2$.
Fortunately the DE $M^2=R^{14}-S^6$ is solvable from the previous theorems:
we have $M^2+S^6 =R^{14}$, and then we get:
\\
$M=t_{1}^2-t_{2}^2$,
\\
$ S^3=2t_{1}t_{2}$,
\\
$R^7=t_{1}^2+t_{2}^2$.
\\
We may suppose that $t_{1}=4p^3$,$ t_{2}=q^3$. By substituting these in the above equations, we get :
\\
$S=2pq$,
\\
$M=16p^6-q^6$,
\\
$R^7=16p^6+q^6$.
\\
Since $(7,6)=1$, the DE $R^7=16p^6+q^6$ is solvable for $R$, $p$ and $q$ from the previous theorems:
\\
$M':=16u^6+v^6$,
\\
$p=M'^{\frac{k}{6} } \cdot u$,
\\
$q=M'^{\frac{k}{6}} \cdot v$,
\\
$R=M'^{\frac{k+1}{7}}$,
\\
and
\\
$K\equiv0 \pmod {6}$ ,
\\
$k\equiv-1\pmod {7}$.
\\

\noindent By putting $u=v=1$, $k=6$, we get:\\
\noindent $(15^{28} \cdot 17^{170} \cdot 2)^{6}+(15^{17} \cdot 17^{102})^{10}=(15^{12} \cdot 17^{73})^{14}$.
\end{exam}
\begin{theorem}
The DE $x^2+y^2 =z^{2^n \cdot c}$, where $c$ is odd and $n\geq2$, is solvable.
\end {theorem}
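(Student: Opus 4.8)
The plan is to realize the right-hand side directly as a sum of two squares and simply read off $x$ and $y$, exploiting the multiplicativity of sums of two squares (equivalently, norms in the Gaussian integers $\mathbb{Z}[i]$). Since a product of two sums of two squares is again a sum of two squares (the Brahmagupta--Fibonacci identity), every power of a sum of two squares is itself a sum of two squares. I would therefore choose $z$ to be a sum of two squares and compute the required power inside $\mathbb{Z}[i]$, so that no appeal to the earlier parametrizations is even needed.

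Concretely, I would take $z=5=2^2+1^2=(2+i)(2-i)$ and expand $(2+i)^{2^{n}c}=x+yi$ with $x,y\in\mathbb{Z}$. Taking absolute values gives $x^2+y^2=\bigl|(2+i)^{2^{n}c}\bigr|^2=\bigl(|2+i|^2\bigr)^{2^{n}c}=5^{2^{n}c}=z^{2^{n}c}$, so that the triple $\bigl(\operatorname{Re}(2+i)^{2^{n}c},\ \operatorname{Im}(2+i)^{2^{n}c},\ 5\bigr)$ solves the equation. Replacing $5$ by any other prime $\equiv 1\pmod{4}$, or by $5t^2$ for a free parameter $t$, then produces infinitely many solutions.

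The one delicate point, and the step I expect to be the main obstacle, is \emph{nontriviality}, i.e.\ guaranteeing $x\neq 0$ and $y\neq 0$. A naive choice such as $z=2=1^2+1^2$ actually fails: since $1+i$ has argument $\pi/4$, the power $(1+i)^{2^{n}c}$ is purely real whenever $4\mid 2^{n}c$, which holds here precisely because $n\geq 2$, and this forces $y=0$. The choice $z=5$ avoids this pitfall: because $2+i$ and $2-i$ are non-associate Gaussian primes, $(2+i)^{e}$ can never be real or purely imaginary for $e\geq 1$, since otherwise $(2+i)^{e}$ would be an associate of its conjugate $(2-i)^{e}$, contradicting unique factorization in $\mathbb{Z}[i]$. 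Hence both $x$ and $y$ are nonzero and the solution is genuinely nontrivial.

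As an alternative that matches the recursive style of this section, one can instead strip one factor of $2$ from the exponent at a time: writing $z^{2^{n}c}=(z^{2^{n-1}c})^2$ and using the Pythagorean parametrization $x=m^2-l^2$, $y=2ml$ reduces the problem to expressing $z^{2^{n-1}c}=m^2+l^2$ as a sum of two squares, and iterating this descent down to the base relation $u^2+v^2=z^{c}$ with $c$ odd, which is again settled by the sum-of-two-squares fact above. Either route completes the proof, but I would present the Gaussian-integer version, since it produces the solution in closed form in a single step while making the nontriviality argument transparent.
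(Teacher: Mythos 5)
Your proof is correct, and it takes a genuinely different route from the paper's. The paper argues by induction on $n$: it strips one factor of $2$ from the exponent at a time via the Pythagorean parametrization $x=m^2-l^2$, $y=2ml$, $z^{2^{n-1}c}=m^2+l^2$, and at the bottom of the descent solves $z^{c}=t_1^2+t_2^2$ (with $c$ odd) by the Chinese-remainder-theorem exponent construction of its second main theorem --- essentially the ``alternative'' you sketch in your final paragraph. Your primary argument instead produces a solution in closed form in a single step, writing $z=5=(2+i)(2-i)$ and taking real and imaginary parts of $(2+i)^{2^{n}c}$, so that $x^2+y^2=5^{2^{n}c}$ follows from multiplicativity of the norm; your nontriviality argument (that $(2+i)^{e}$ is never real or purely imaginary, since otherwise it would be an associate of $(2-i)^{e}$, contradicting unique factorization into the non-associate primes $2+i$ and $2-i$) is clean and correct, and your observation that $z=2$ fails for $n\geq2$ because $(1+i)^{4}=-4$ is a genuine pitfall worth flagging. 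What the Gaussian-integer route buys is an explicit, non-recursive solution with a transparent guarantee that $x\neq0$ and $y\neq0$ --- a point the paper's proof leaves entirely implicit; what the paper's route buys is uniformity with the machinery of its second main theorem and the surrounding theorems of the section, at the cost of unwinding nested parametrizations that yield very large numbers. Both establish solvability, which is all the statement claims.
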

\begin{proof}
: We try to solve the DE by using induction. If $n=2$, we have\\
$x^2+y^2=z^{4c}$,
then
\\
$x=m^2-n^2$,
\\
$y=2mn$,
\\
$z^{2c}=m^2+n^2$.
\\
Again we put:
\\
$n=2t_{1}t_{2}$,
\\
$m=2t_{1}^2-t_{2}^2$,
\\
$z^c=t_{1}^2+t_{2}^2$.
\\
Since $(2,c)=1$, the DE $z^c=t_{1}^2+t_{2}^2$ is solvable, then the main DE is solvable as well, and the proof for $n=2$ is complete.
Next, suppose the above DE is solvable for $n$, then we solve it for $n+1$. We have:
\\
$x^2+y^2 =z^{2^{n+1} \cdot c}$,
then
\\
$x=m^2-n^2$,
\\
$y=2mn$,
\\
$z^{2^{n} \cdot c}=m^2+n^2$.
\\
Since the DE $z^{2^{n} \cdot c}=m^2+n^2$, is solvable for $m$, $n$, and $z$ by our assumption, then the DE $x^2+y^2 =z^{2^{n+1} \cdot c}$, is solvable as well, and the proof is completed.
\end{proof}
\begin{rem}
If in the previous theorem, we take $c=1$, we can solve it by another beautiful method. We start from the identity
\\
$(m^2-n^2)^2+(2mn)^2=(m^2+n^2)^2$.\\

\noindent We see that: if in the above identity $m^2+n^2$, to be square, then we get a solution for the DE $x^2+y^2=z^4$. As an example, we can get $m$ and $n$ from the Pythagorean triples. For $m=4$, $n=3$, we obtain:\\
$7^2+24^2=(5^2)^{2}=(3^2+4^2)^{2}=5^4$.\\

\noindent By letting $n=7$, $m=24$, we obtain:
$572^2+336^2=(7^2+24^2)^{2}=(5^4)^{2}=5^8$.\\

\noindent By continuing
in this way, if we put $m=572$, $n=336$, we get a solution for the DE $x^2+y^2=z^{16}$.
If we change $m$, $n$, we obtain another identity. If $m=120$, $n=119$, we have :
$239^2+28560^2=13^8$.
It is clear that by using this method we can find infinitely many solutions for the DE
$x^2+y^2 =z^{2^n \cdot c}$ with $(x,y,z)=1$, as mentioned in the Beukers theorem (see \cite{3}) in the introduction.
\end{rem}
In the end, it is clear that at some variables parametric solutions obtained for the Diophantine equations, we may get one variable parametric solutions for each case of the above Diophantine equations by fixing the other variables.

\begin{center}\textbf{Acknowledgements}
\end{center}
The authors would like to express their hearty thanks to the
anonymous referee for a careful reading of the paper and for many
careful comments and remarks which improved its quality.

%\begin{acknowledgements}
%If you'd like to thank anyone, place your comments here
%and remove the percent signs.
%\end{acknowledgements}

% BibTeX users please use one of
%\bibliographystyle{spbasic} % basic style, author-year citations
%\bibliographystyle{spmpsci} % mathematics and physical sciences
%\bibliographystyle{spphys} % APS-like style for physics
%\bibliography{} % name your BibTeX data base

\begin{thebibliography}{}
\bibitem[1]{1} A. Bremner, Diophantine equations and nontrivial Racah coefficients, J. Math.
Phys. 27 (1986), pp. 1181–84.
\bibitem[2]{2} A. Bremner and S. Brudno, A complete determination of the zeros of weigh-1
6j coefficients, J. Math. Phys. 27 (1986), pp. 2613–2615.
\bibitem[3]{3} F. Beukers, The Diophantine equation $Ax^p+By^q=Cz^r$, Duke math $J. 91(1): 61-68, (1988)$.
\bibitem[4]{4} A. Choudhry, Some diophantine problems concerning equal sums
of integers and their cubes, Hardy-Ramanujan Journal
Vol. 33 (2010) pp. 59–70.
\bibitem[5]{5} L. E. Dickson, History of theory of numbers, Vol.2, Chelsea Publishing Com-pany, New York, reprint, (1992).
\bibitem[6]{6} H. Darmon and A. Granville, On the equation $z^m=f(x,y)$ and $Ax^p+By^q=Cz^r$, Bull, Londanmath. soc. $27(1995), 513-543$.
\bibitem[7]{7} J. J. Labarthe, Parametrization of the linear zeros of 6j coefficients, J. Math.
Phys. 27 (1986), pp. 2964–65.

% and use \bibitem to create references. Consult the Instructions
% for authors for reference list style.
%
%\bibitem{RefJ}
% Format for Journal Reference
%Author, Article title, Journal, Volume, page numbers (year)
% Format for books
%\bibitem{RefB}
%Author, Book title, page numbers. Publisher, place (year)
% etc
\end{thebibliography}

% Non-BibTeX users please use

\end{document}